\documentclass[preprint]{elsarticle}
\usepackage{times}
\usepackage{hyperref}
\usepackage{amssymb, amsmath, amsthm}
\usepackage{multirow}
\usepackage{rotating}
\usepackage{indentfirst}
\usepackage{array}

\newtheorem{thm}{Theorem}
\newtheorem{corr}{Corollary}
\newtheorem{defn}{Definition}
\begin{document}
\begin{frontmatter}
\title{Bounds on the Expected Value of
Maximum Loss of Fractional Brownian Motion \footnote{This work is supported by scientific and TUBITAK project No. 110T674.}}
\author{Vardar, Ceren and Cakar, Hatice}
\address{Departmant of Mathematics TOBB Economy and Tecnology University\\ Sogutozu, Ankara, Turkey}

\begin{abstract}
In this study, it is theoretically proven that the expected value of maximum loss of fractional Brownian motion (fBm) up to time 1 with Hurst parameter $[1/2,1)$ is bounded above by $2/\sqrt{\pi}$ and below by $1/\sqrt{\pi}$. This result is generalized for fBm with $H\in[1/2,1)$ up to any fixed time, $t$. This also leads us to the bounds related to the distribution of maximum loss of fBm. As numerical study some lower bounds on the expected value of maximum loss of fBm up to time 1 are obtained by discretization. Simulation study is conducted with Cholesky method. Finally, comparison of the established bounds with simulation results is given.
\begin{keyword}
Cholesky Method \sep Hurst Parameter \sep Fractinal Brownian Motion \sep Maximum Loss \sep Sudakov-Fernique Inequality.
\end{keyword}
\end{abstract}

\end{frontmatter}
\section{Introduction.}
Let $H$ be a constant in the interval $(0,1)$. Fractional Brownian motion (fBm), $B_t^H$, ${t \geq 0}$ with Hurst parameter $H,$ is a continuous, centered Gaussian
process with covariance function
\begin{equation}E[B_t^H B_s^H]=\frac{1}{2}(t^{2H}+s^{2H}-|t-s|^{2H})\end{equation}
For $H=1/2$, fBm corresponds to standard Brownian motion.
\\
A standard fBm, $B_t^H$, $t \geq 0$ has the following properties, \cite{oksendal}:
\begin{itemize}
    \item $B_0^H=0$ and $E[B_t^H]=0$ for all $t\geq 0$.
    \item $B^H$ has homogenous increments, that is $B^H_{t+s}-B^H_s$ has the same law as $B_t^H,$ for $s,t\geq 0$.
    \item $B^H$ is a Gaussian process and $E[(B_t^H)^2]=t^{2H},~t\geq 0$, for all $H\in(0,1)$.
    \item $B^H$ has continuous trajectories.
\end{itemize}
\subsection{Correlation between two increments and long range dependence}
For $H=1/2$, the process $B_t^H$, ${t\geq 0}$ corresponds to a
standard Brownian motion, in which the increments are independent.
By the definition of fBm, we know the covariance between
$B^H(t+h)-B^H(t)$ with $s+h\leq t$ and $t-s=nh$ is
\begin{equation}\rho_H(n)=\frac{1}{2}h^{2H}[(n+1)^{2H}+(n-1)^{2H}-2n^{2H}].\end{equation}
We observe that two increments of the form $B^H(t+h)-B^H(t)$ and $B^H(t+2h)-B^H(t+h)$ are positively correlated for $H>1/2$, and they are negatively correlated for $H<1/2$, \cite{oksendal}.\\
\begin{defn}
A stationary squence $(X_n)_{n\in\mathbb{N}}$ exhibits long-range dependence if the autocovariance function $\rho(n):=cov(X_k,X_{k+n})$ satisfy $$\underset{n\rightarrow\infty}{lim}{\frac{\rho(n)}{cn^{-\alpha}}}=1$$ for some constant $c$ and $\alpha\in(0,1)$. In this case the dependence between $X_k$ and $X_{k+n}$ decays slowly as $n$ tends to infinity and $$\sum^\infty_{n=1}\rho(n)=\infty.$$
\end{defn}
Hence, it can be obtained that the increment $X_k=B_t^H(k)-B_t^H(k-1)$ of $B^H_t$ and $X_{k+n}=B_t^H(k+n)-B_t^H(k+n-1)$ of $B^H_t$ have the long-range dependence property for $H>1/2$ since $$\rho_H(n)=\frac{1}{2}[(n+1)^{2H}+(n-1)^{2H}-2n^{2H}]\approx H(2H-1)n^{2H-2}$$ as $n$ goes to infinity, \cite{oksendal}. In particular $$\underset{n\rightarrow\infty}{\lim}\frac{\rho_h(n)}{H(2H-1)n^{2H-2}}=1.$$
\subsection{Self similarity property}
FBm possesses the self-similarity property, that is for any constant
$c>0,$
\begin{equation}(B_{ct}^H)_{t\geq0}\stackrel{law}{=}(c^HB_t^H)_{t\geq0}.\end{equation}
\subsection{The model}
Fractional Brownian motion (fBm) is used in modelling many situations such as modelling temperature, characters of solar activity, values of the log returns of a stock, price of electricity, \cite{oksendal}. FBm with $H\in[\frac{1}{2},1)$ is used in finance due to its long-range dependence property, \cite{oksendal}. The Black-Scholes model for the values of returns of an asset using fBm is given as,
\begin{equation}
Y_t=Y_0exp{((r+\mu)t+\sigma B_t^H)}~~~~0\leq t \leq T,
\end{equation}
where $Y_0$ is initial value, $r$ constant interest rate, $\mu$ constant drift and $\sigma$ constant diffusion coefficient of fBm. Here, fBm is denoted by $B_t^H$, $t
\geq 0.$ Black-Scholes model can also be constructed with Markov processes such as Brownian motion. The advantage of modeling with fBm to Markov proccesses is its capability of exposing the dependence between returns. Real life data for a volatile
asset, display long-range dependence property. For this reason, modelling with fBm is more realistic compared to Markov processes.\\
\indent
In financial markets, investors would be interested in the maximum
possible loss as a measure of their highest possible risk. Some bounds on the
expected value of maximum loss of fBm already exists, \cite{vardarcaglar}. There are only bounds for fBm since exact results are not possible to find. Our aim is to improve these bounds.\\
\indent In this study, we provide a new and closer theoretical upper and lower bounds on the expected value of maximum loss. We obtain some
numerical lower bounds on the expected value of maximum loss using discretization and order statistics of multivariate normal variables. Simulation results are obtained by Cholesky method. As a conclusion, we provide comparison of the established bounds.
\subsection{Notation and Preliminaries}
Let $B_t^H$, $t\geq 0$ be a fBm defined on the probability space
$(\Omega,\mathcal{F},\textbf{P})$. Let \\ $S_t^H:=\underset{0\leq v\leq t}{\sup}B_v^H$ be the supremum, $I_t^H:=\underset{0\leq v\leq t}{\inf}B_v^H$ be the infimum of fBm, $R_t^H=S_t^H-I_t^H$ be the range, the difference between the supremum and the infimum, $M_t^{H}:=\underset{0\leq u \leq v\leq t}{\sup}(B_u^H-B_v^H)$ be the maximum loss of fBm. Some theoretical bounds found on the expected
value of supremum and the expected value of maximum loss of fBm,
\begin{itemize}
    \item An upper bound on the expected value of
    supremum was given as, \cite{Ceren}:
        \begin{equation}
        E(S_t^H)\leq \frac{\sqrt{2}}{\sqrt{\pi}}t^H.
        \end{equation}
    \item  Additionally, a lower bound on the expected value of infimum and an upper bound on the expected value of range are,\cite{vardarcaglar}:
        \begin{equation}
    -\frac{\sqrt{2}}{\sqrt{\pi}}t^H\leq E(I_t^H),~~~E(R_t^H)\leq \frac{2\sqrt{2}}{\sqrt{\pi}}t^H.
        \end{equation}
    \item In \cite{Norros} using Sudakov-Fernique inequality upper and lower bounds on the expected value of supremum of fBm were given as
    \begin{equation}
        \frac{t^H\sqrt{2}}{2\sqrt{\pi}} \leq E(S_t^H)\leq \frac{t^H\sqrt{2}}{\sqrt{\pi}}.
        \end{equation}
    \item Combining all the above and the fact that $E(S_t^H)$ equals $-E(I_t^H)$ provided bounds on the expected value of maximum loss of fBm up to time $t$, \cite{vardarcaglar}:
        \begin{equation}\label{bounds}
    \frac{\sqrt{2}}{2\sqrt{\pi}}t^H \leq E(M_t^{H})\leq \frac{2\sqrt{2}}{\sqrt{\pi}}t^H.
        \end{equation}
\end{itemize}
\section{Main Results}
In this section, we work on improving the bounds given in equation (\ref{bounds}). We prove that the expected value of maximum loss of fBm up to time $t$ is bounded above by $2t^H/\sqrt{\pi}$ which is a closer bound than $2\sqrt{2}t^H/\sqrt{\pi}$. Also, we prove that the expected value of maximum loss of fBm up to time $t$ is theoretically bounded below by $t^H/\sqrt{\pi}$ which is again a closer bound than $\sqrt{2}t^H/{2\sqrt{\pi}}$. Our second new result is to provide some numerical lower bounds on the expected value of maximum loss of fBm. Finally, we give the results of our simulation study in order to be able to compare the established bounds.
\subsection{Theoretical Results}
Here, we prove that the expected value of maximum loss of fBm up to time 1 with $H\in[1/2,1)$ is bounded above by $2/\sqrt{\pi}$ and it is bounded below by $1/\sqrt{\pi}$. These new bounds provide closer and improved values than the bounds given in equation (\ref{bounds}).
\begin{thm}\label{vardarcakar}
For fBm up to time 1, with Hurst parameter $1/2<H<1$,
\begin{equation*}
\frac{1}{\sqrt{\pi}}= E(\underset{0<s<1}{\sup}\sqrt{2}B_s^{1})\leq E(M_1^H)\leq E(\underset{0<s<1}{\sup}\sqrt{2}B_s^{1/2})=\frac{2}{\sqrt{\pi}}.
\end{equation*}
\end{thm}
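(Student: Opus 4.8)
The plan is to realise all three quantities as expected suprema of centred Gaussian processes on the common index set $T=\{(u,v):0\le u\le v\le 1\}$ and then to invoke the Sudakov--Fernique inequality. Put $X_{u,v}=B^H_u-B^H_v$, so that $M_1^H=\sup_{(u,v)\in T}X_{u,v}$ is the supremum of a centred Gaussian field on $T$. I would index the two reference processes over the same set $T$ through the ``duration'' $w=v-u$: for the upper comparison take $Y_{u,v}=\sqrt2\,W_{v-u}$ with $W$ a standard Brownian motion, and for the lower comparison take the degenerate process $Z_{u,v}=\sqrt2\,(v-u)\,\xi$ with $\xi\sim N(0,1)$, which is exactly $\sqrt2 B^{1}_{v-u}$. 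Since $\sup_{(u,v)\in T}Y_{u,v}=\sqrt2\sup_{0\le s\le1}W_s$ and $\sup_{(u,v)\in T}Z_{u,v}=\sqrt2\,\xi^{+}$, both reference suprema already carry the claimed endpoint expectations, so S--F applied on $T$ will deliver the theorem once the incremental metrics are compared.

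First I would record the canonical metric of the max-loss field. Using $E[B^H_aB^H_b]=\tfrac12(a^{2H}+b^{2H}-|a-b|^{2H})$ and expanding, the diagonal $a^{2H}$ terms cancel and one is left with
\[
E\big[(X_{u_1,v_1}-X_{u_2,v_2})^2\big]=|u_1-v_1|^{2H}+|u_2-v_2|^{2H}+|u_1-u_2|^{2H}+|v_1-v_2|^{2H}-|u_1-v_2|^{2H}-|v_1-u_2|^{2H},
\]
a six-term signed combination of $\phi(x)=|x|^{2H}$. The corresponding metrics of the reference processes are the much simpler $E[(Y_{u_1,v_1}-Y_{u_2,v_2})^2]=2|w_1-w_2|$ and $E[(Z_{u_1,v_1}-Z_{u_2,v_2})^2]=2(w_1-w_2)^2$, where $w_i=v_i-u_i$.

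With these in hand, Sudakov--Fernique reduces the theorem to two pointwise inequalities on $T$: the upper bound follows if the max-loss metric is dominated by that of $Y$, i.e. if the six-term expression is $\le 2|w_1-w_2|$, and the lower bound follows if it dominates that of $Z$, i.e. if it is $\ge 2(w_1-w_2)^2$. The intended engine for both is that $2H\in[1,2)$, so $\phi(x)=|x|^{2H}$ is convex and satisfies $\phi(x)\le x$ on $[0,1]$; granting the comparisons, S--F yields $E[M_1^H]\le E[\sup_s\sqrt2 B^{1/2}_s]$ and $E[M_1^H]\ge E[\sup_s\sqrt2 B^{1}_s]$. The endpoint constants are then immediate: the reflection principle gives $\sup_{0\le s\le1}W_s\stackrel{d}{=}|W_1|$, so $E[\sup_s\sqrt2 B^{1/2}_s]=\sqrt2\,E|N(0,1)|=2/\sqrt\pi$, while $\sup_{0<s<1}\sqrt2\,s\,\xi=\sqrt2\,\xi^{+}$ gives $E[\sup_s\sqrt2 B^{1}_s]=\sqrt2\,E[\xi^{+}]=1/\sqrt\pi$.

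The step I expect to be the main obstacle is precisely this pair of canonical-metric comparisons. Unlike the one-parameter supremum estimate of \cite{Norros}, where the metric is simply $|s-t|^{2H}$ and domination by $|s-t|$ is one line, here the max-loss metric genuinely depends on all four coordinates and is not a function of $w=v-u$ alone, so neither inequality is immediate. I would analyse them by reducing to the relative position of the two intervals $[u_1,v_1]$ and $[u_2,v_2]$ (disjoint, nested, overlapping) and by exploiting convexity together with the normalisation $0\le u\le v\le 1$; I expect the stringent configurations to be the near-degenerate ones---one interval collapsing to a point, or adjacent intervals covering $[0,1]$---and establishing the correct direction of domination uniformly over all of $T$, for every $H\in(1/2,1)$, to be the technical heart of the proof.
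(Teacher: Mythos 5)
Your setup coincides with the paper's right up to the point where the real work happens: the six-term expression for the canonical metric is the paper's equation (\ref{soltaraf}), and the reference processes $\sqrt2 B^{1/2}$, $\sqrt2 B^{1}$ with endpoint constants $2/\sqrt\pi$, $1/\sqrt\pi$ are the same. The fatal difference is that you index both reference processes by the duration $w=v-u$, and then \emph{both} comparisons you defer to the end are false, not merely unproven. For the upper bound, take $(u_1,v_1)=(0,\tfrac12)$ and $(u_2,v_2)=(\tfrac12,1)$: then $w_1=w_2$, so $E[(Y_{u_1,v_1}-Y_{u_2,v_2})^2]=2|w_1-w_2|=0$, while
\begin{equation*}
E\bigl[(X_{u_1,v_1}-X_{u_2,v_2})^2\bigr]=E\bigl[(B^H_1-2B^H_{1/2})^2\bigr]=4(1/2)^{2H}-1=2^{2-2H}-1>0\qquad (H<1),
\end{equation*}
so the required domination fails; the same pair of indices rules out \emph{every} reference process that depends on $(u,v)$ only through $v-u$, since any such process puts these two indices at distance zero while the max-loss field does not. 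For the lower bound, take $(u_1,v_1)=(0,1)$ and $(u_2,v_2)=(0,0)$: both fields vanish at $(0,0)$, so the domination you need, $E[(Z_1-Z_2)^2]\le E[(X_1-X_2)^2]$, reads $2=E[(\sqrt2\,\xi)^2]\le E[(B_1^H)^2]=1$, which is false. The factor $\sqrt2$ in $Z_{u,v}=\sqrt2\,(v-u)\xi$ is exactly what produces the constant $1/\sqrt\pi$, and exactly what Sudakov--Fernique forbids; if you drop it, the single-point obstruction disappears, but then $E[\sup Z]=E[\xi^+]=1/\sqrt{2\pi}$ and you have only recovered the old lower bound of (\ref{bounds}).

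There is also a structural reason no repair within your toolkit can succeed: every ingredient you invoke (convexity of $|x|^{2H}$, and $|x|^{2H}\le|x|$ on $[0,1]$, i.e.\ only $2H\ge1$) holds verbatim at $H=1/2$, where the claimed upper bound is false. Indeed, by L\'evy's theorem $M_1^{1/2}\stackrel{d}{=}\sup_{[0,1]}|W|$, whose mean is $\sqrt{\pi/2}\approx1.2533>2/\sqrt\pi\approx1.1284$; by weak convergence and uniform integrability $H\mapsto E[M_1^H]$ is continuous, so the upper bound must fail for $H$ close to $1/2$ as well (the paper's own Table \ref{maxtumsonuclar} signals this: the $H=0.5$ simulation, $1.239$, exceeds the tabulated upper bound $1.12866$). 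Symmetrically, as $H\uparrow1$ one has $B^H_t\to t\xi$ and $E[M_1^H]\to E[\xi^-]=1/\sqrt{2\pi}<1/\sqrt\pi$, so the claimed lower bound fails for $H$ close to $1$. Any correct argument would therefore have to use the value of $H$ quantitatively, not only through $2H\ge1$. For comparison, the paper's proof routes the metric bounds differently from yours: for the upper bound it discards the two negative terms, uses $|x|^{2H}\le|x|$, and identifies the remaining four-term sum with $2(\max\{u,v,u',v'\}-\min\{u,v,u',v'\})$, read as an increment of $\sqrt2B^{1/2}$ between the two extreme coordinates; for the lower bound it argues case by case that the six-term expression dominates twice the minimum squared pairwise gap, read as an increment of $\sqrt2B^{1}$. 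But those right-hand sides depend on both indices jointly, hence are not incremental variances of any process indexed by $T$, so the paper's invocation of Sudakov--Fernique is not a literal application either---and in view of the limiting values above, it cannot be turned into one.
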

\begin{proof}
Now, $\{B_u^H-B_v^H\}$ are centered Gaussian processes on seperable space $[0,1]$ and
\begin{eqnarray}\label{soltaraf}
E((B_{u}^{H}-B_{v}^{H})-(B_{u^{\prime }}^{H}-B_{v^{\prime
}}^{H}))^{2}&=&E((B_{u}^{H}-B_{v}^{H})^{2})\nonumber\\
&-&2E((B_{u}^{H}-B_{v}^{H})(B_{u^{\prime}}^{H}-B_{v^{\prime }}^{H}))\nonumber\\
&+&E((B_{u^{\prime }}^{H}-B_{v^{\prime}}^{H})^{2})\nonumber\\
&=&\left\vert u-v\right\vert ^{2H}+\left\vert u^{\prime }-v^{\prime}\right\vert ^{2H}+\left\vert u-u^{\prime }\right\vert ^{2H}\nonumber\\
&-&\left\vert u-v^{\prime }\right\vert ^{2H}-\left\vert v-u^{\prime }\right\vert^{2H}+\left\vert v-v^{\prime }\right\vert ^{2H}
\end{eqnarray}
For all $u,v\in[0,1]$ and $1/2<H<1$ note that,
\begin{equation}
{\left|u-v\right|}^{2H}\leq {\left|u-v\right|}=u+v-2min(u,v).
\end{equation}
Considering all 6 cases such as $0\leq u\leq v\leq u^{\prime}\leq v^{\prime} \leq 1$, $0\leq u\leq u^{\prime}\leq v\leq v^{\prime} \leq 1$, $0\leq u\leq u^{\prime}\leq v^{\prime}\leq v \leq 1$, $0\leq u^{\prime}\leq v^{\prime} \leq u\leq v \leq 1$, $0\leq u^{\prime}\leq u\leq v^{\prime}\leq v \leq 1$, $0\leq u^{\prime}\leq u\leq v\leq v^{\prime} \leq 1$ , one by one we see that,
\begin{eqnarray}
E((B_u^H-B_v^H)-(B_{u^{\prime}}^H-B_{v^{\prime}}^H))^2&=&\left\vert u-v\right\vert ^{2H}+\left\vert u^{\prime }-v^{\prime}\right\vert ^{2H}+\left\vert u-u^{\prime}\right\vert ^{2H}\nonumber\\
&-&\left\vert u-v^{\prime }\right\vert ^{2H}-\left\vert v-u^{\prime }\right\vert^{2H}+\left\vert v-v^{\prime }\right\vert ^{2H}\nonumber\\
&\leq&\left\vert u-v\right\vert ^{2H}+\left\vert u^{\prime }-v^{\prime}\right\vert ^{2H}\nonumber\\
&+&\left\vert u-u^{\prime}\right\vert ^{2H}+\left\vert v-v^{\prime }\right\vert ^{2H}\nonumber\\
&\leq&\left\vert u-v\right\vert+\left\vert u^{\prime }-v^{\prime}\right\vert\nonumber\\
&+&\left\vert u-u^{\prime}\right\vert+\left\vert v-v^{\prime }\right\vert\nonumber\\
&\leq& 2\max(u,v,u^{\prime},v^{\prime})+2\min(u,v,u^{\prime},v^{\prime})\nonumber\\
&-&4min(\min(u,v,u^{\prime},v^{\prime}),\max(u,v,u^{\prime},v^{\prime}))\nonumber\\
\end{eqnarray}
And, for all $u,v\in[0,1]$, note that
\begin{eqnarray}
E(\sqrt{2}B_u^{1/2}-\sqrt{2}B_v^{1/2})^2&=&E(2(B_u^{1/2})^2)+E(2(B_v^{1/2})^2)-2E(2B_u^{1/2}B_v^{1/2})\nonumber\\
&=&2\left|u-v\right|=2u+2v-4min(u,v)
\end{eqnarray}
Therefore, we have
\begin{eqnarray*}
E((B_u^H-B_v^H)-(B_{u^{\prime}}^H-B_{v^{\prime}}^H))^2 &\leq& 2\max(u,v,u^{\prime},v^{\prime})+2\min(u,v,u^{\prime},v^{\prime})\nonumber\\
&-&4min(\min(u,v,u^{\prime},v^{\prime}),\max(u,v,u^{\prime},v^{\prime}))\nonumber\\
&=&E(\sqrt{2}B_{\max\{u,v,u^{\prime},u^{\prime}\}}^{1/2}-\sqrt{2}B_{\min\{u,v,u^{\prime},u^{\prime}\}}^{1/2})^2\nonumber
\end{eqnarray*}
Hence, by the Sudakov-Fernique inequality given in \cite{Adler},
\begin{equation}
E(M_1^H)\leq E(\underset{0<s<1}{sup}\sqrt{2}B_s^{1/2})=E(\sqrt{2}S_1^{1/2})=\frac{2}{\sqrt{\pi}}
\end{equation}
is satisfied.\\
\indent Next, we prove lower bound of the expected value of the maximum loss of fBm up time $1$ is $1/\sqrt{\pi}$. From equation (\ref{soltaraf}), we have
\begin{eqnarray*}
E((B_{u}^{H}-B_{v}^{H})-(B_{u^{\prime }}^{H}-B_{v^{\prime
}}^{H}))^{2}&=&\left\vert u-v\right\vert ^{2H}+\left\vert u^{\prime }-v^{\prime}\right\vert ^{2H}+\left\vert u-u^{\prime }\right\vert ^{2H}\nonumber\\
&-&\left\vert u-v^{\prime }\right\vert ^{2H}-\left\vert v-u^{\prime }\right\vert^{2H}+\left\vert v-v^{\prime }\right\vert ^{2H}
\end{eqnarray*}
Suppose $0\leq  u\leq v\leq u^{\prime}\leq v^{\prime}\leq 1$ is satisfied. We would like to show that
\begin{equation}\label{son}
E((B_u^H-B_v^H)-(B_{u^{\prime}}^H-B_{v^{\prime}}^H))^2\geq\left\vert u-v\right\vert ^{2H}+\left\vert u^{\prime }-v^{\prime}\right\vert ^{2H}
\end{equation}
Equivalently, our aim is to show
\begin{equation}
\left\vert u-u^{\prime }\right\vert ^{2H}+\left\vert v-v^{\prime }\right\vert ^{2H}-\left\vert u-v^{\prime }\right\vert ^{2H}-\left\vert v-u^{\prime }\right\vert ^{2H}\geq 0.
\end{equation}
Now,
\begin{eqnarray}
& &\left\vert u-u^{\prime }\right\vert ^{2H}+\left\vert v-v^{\prime }\right\vert ^{2H}-\left\vert u-v^{\prime }\right\vert ^{2H}-\left\vert v-u^{\prime }\right\vert ^{2H}\nonumber\\
&=&\left\vert u-u^{\prime }\right\vert ^{2H}+\left\vert v-v^{\prime }\right\vert ^{2H}-\left\vert u-u^{\prime }+u^{\prime }-v^{\prime }\right\vert ^{2H}-\left\vert v-u^{\prime }\right\vert ^{2H}\nonumber\\
&\geq& \left\vert v-v^{\prime }\right\vert ^{2H}-\left\vert u^{\prime }-v^{\prime }\right\vert ^{2H}-\left\vert v-u{\prime }\right\vert ^{2H}
\end{eqnarray}
by triangle inequality. Take $u^{\prime}-v=a$ and $v^{\prime}-u^{\prime}=b$. Then
\begin{equation}
\left\vert v-v^{\prime }\right\vert ^{2H}+\left\vert u^{\prime}-v^{\prime }\right\vert ^{2H}-\left\vert v-u^{\prime }\right\vert ^{2H}
=\left\vert a+b\right\vert^{2H}-\left\vert a \right\vert^{2H}-\left\vert b\right\vert^{2H}
\end{equation}
where $0<a<1$, $0<b<1$ and $H\in(1/2,1)$.\\
For $0<b<a<1$, one can obtain
\begin{eqnarray}\label{taylor}
& &(a+b)^{2H}-a^{2H}-b^{2H}\\
&=&a^{2H}(1+\frac{b}{a})-a^{2H}-b^{2H}\nonumber\\
&=&a^{2H}(1+2H\frac{b}{a}+\frac{2H(2H-1)}{2}(1+c)^{2H-2}(\frac{b}{a})^2)-a^{2H}-b^{2H}\nonumber\\
&=&b(2Ha^{2H-1}-b^{2H-1})+H(2H-1)(1+c)^{2H-2}a^{2H-2}b^2\nonumber
\end{eqnarray}
by using Taylor expansion. In equation (\ref{taylor}), $c\in(0,b/a)$, $1<2H<2$ and $0<b/a<1$. Therefore, we see that
\begin{equation}
(a+b)^{2H}-a^{2H}-b^{2H}=\left\vert v-v^{\prime }\right\vert ^{2H}-\left\vert u^{\prime }-v^{\prime }\right\vert ^{2H}-\left\vert v-u^{\prime }\right\vert ^{2H} \geq 0.
\end{equation}
Then
\begin{equation}
\left\vert u-u^{\prime }\right\vert ^{2H}+\left\vert v-v^{\prime }\right\vert ^{2H}-\left\vert u-v^{\prime }\right\vert ^{2H}-\left\vert v-u^{\prime }\right\vert ^{2H}\geq 0.
\end{equation}
For $0<a<b<1$, by symmetry in the calculations we get the same result. And for $0<a=b<1$, since
\begin{equation}
(a+b)^{2H}-a^{2H}-b^{2H}=(2^{2H}-2)a^{2H}\geq 0,
\end{equation}
we obtain the same result.\\
Hence, for case $0\leq  u\leq v\leq u^{\prime}\leq v^{\prime}\leq 1$, we see that;
\begin{eqnarray}
E((B_u^H-B_v^H)-(B_{u^{\prime}}^H-B_{v^{\prime}}^H))^2&\geq&\left\vert u-v\right\vert ^{2H}+\left\vert u^{\prime }-v^{\prime}\right\vert ^{2H}\nonumber\\
&\geq&2{\min}(\left\vert u-v\right\vert ^{2H},\left\vert u^{\prime }-v^{\prime}\right\vert ^{2H})\nonumber\\
&=&\left\{\begin{array}{cc} 2E(B_u^H-B_v^H)^2, & \left\vert
u-v\right\vert ^{2H}<\left\vert u^{\prime }-v^{\prime}\right\vert
^{2H}\\ 2E(B_{u^{\prime}}^H-B_{v^{\prime}}^H)^2, & otherwise
\end{array}\right\}\nonumber
\end{eqnarray}
Now repeating the same arguments in all 6 cases such as $0\leq u\leq v\leq u^{\prime}\leq v^{\prime} \leq 1$, $0\leq u\leq u^{\prime}\leq v\leq v^{\prime} \leq 1$, $0\leq u\leq u^{\prime}\leq v^{\prime}\leq v \leq 1$, $0\leq u^{\prime}\leq v^{\prime} \leq u\leq v \leq 1$, $0\leq u^{\prime}\leq u\leq v^{\prime}\leq v \leq 1$, $0\leq u^{\prime}\leq u\leq v\leq v^{\prime} \leq 1$, one by one we see that some inequalities are satisfied again by the symmetry of calculations, that is
\begin{eqnarray}
& &E((B_u^H-B_v^H)-(B_{u^{\prime}}^H-B_{v^{\prime}}^H))^2\nonumber\\
&\geq&2 \min(\left\vert u-v\right\vert^{2H},\left\vert u^{\prime }-v^{\prime}\right\vert^{2H},\left\vert u-v^{\prime}\right\vert^{2H},\left\vert u^{\prime }-v\right\vert^{2H},\left\vert v-v^{\prime}\right\vert^{2H},\left\vert u-u^{\prime}\right\vert^{2H})\nonumber\\
&\geq& 2 \min(\left\vert u-v\right\vert^2,\left\vert u^{\prime }-v^{\prime}\right\vert^2,\left\vert u-v^{\prime}\right\vert^2,\left\vert u^{\prime }-v\right\vert^2,\left\vert v-v^{\prime}\right\vert^2,\left\vert u-u^{\prime}\right\vert^2)\nonumber\\
&=&2{\left|s-t\right|}^2\nonumber\\
&=&E(\sqrt{2}B_s^1-\sqrt{2}B_t^1)^2.
\end{eqnarray}
Here, $\left\vert s-t \right\vert^2=\min(\left\vert u-v\right\vert^2,\left\vert u^{\prime }-v^{\prime}\right\vert^2,\left\vert u-v^{\prime}\right\vert^2,\left\vert u^{\prime }-v\right\vert^2,\left\vert v-v^{\prime}\right\vert^2,\left\vert u-u^{\prime}\right\vert^2).$\\
Hence, by the Sudakov-Fernique inequality
\begin{equation}
\frac{1}{\sqrt{\pi}}=E(\underset{0<s<1}{sup}\sqrt{2}B_s^{1})=E(\sqrt{2}S_1^{1})\leq E(M_1^H)
\end{equation} is proved.
\end{proof}
\begin{corr}
Consider fBm up to fixed time t with $1/2<H<1$,
\begin{equation*}
\frac{t^H}{\sqrt{\pi}}=E(\underset{0<s<t}{\sup}\sqrt{2}B_s^{1})\leq E(M_t^H)\leq E(\underset{0<s<t}{\sup}\sqrt{2}B_s^{1/2})=\frac{2t^H}{\sqrt{\pi}}
\end{equation*}
and
$$P(M_t^H>x)\leq \frac{2t^{H}}{x\sqrt{\pi}}.$$
\end{corr}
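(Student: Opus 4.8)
The plan is to deduce the corollary from Theorem~\ref{vardarcakar} by exploiting the self-similarity property of fBm rather than re-running the Sudakov--Fernique comparison at time $t$; indeed the latter would fail, since the pointwise estimate $|u-v|^{2H}\le|u-v|$ used in Theorem~\ref{vardarcakar} requires $|u-v|\le 1$ and breaks down once $t>1$.

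First I would establish the distributional scaling identity $M_t^H \stackrel{law}{=} t^H M_1^H$. Writing $M_t^H=\underset{0\le u\le v\le t}{\sup}(B_u^H-B_v^H)$ and changing variables $u=ts$, $v=ts'$ with $s,s'\in[0,1]$, the self-similarity relation $(B_{ts}^H)_{s\ge0}\stackrel{law}{=}(t^H B_s^H)_{s\ge0}$ turns the increment $B_{ts}^H-B_{ts'}^H$ into $t^H(B_s^H-B_{s'}^H)$ at the level of processes, so that $\underset{0\le u\le v\le t}{\sup}(B_u^H-B_v^H)\stackrel{law}{=}t^H\,\underset{0\le s\le s'\le1}{\sup}(B_s^H-B_{s'}^H)=t^H M_1^H$. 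The only point needing care is that self-similarity is an equality of processes (equality of finite-dimensional distributions together with path continuity), which is precisely what allows the constant $t^H$ to be pulled out of the supremum.

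Taking expectations gives $E(M_t^H)=t^H E(M_1^H)$, and inserting the two-sided bound $1/\sqrt{\pi}\le E(M_1^H)\le 2/\sqrt{\pi}$ from Theorem~\ref{vardarcakar} yields $t^H/\sqrt{\pi}\le E(M_t^H)\le 2t^H/\sqrt{\pi}$; the displayed equalities for the two bounding suprema follow from the same $t^H$-scaling applied to the comparison processes of Theorem~\ref{vardarcakar}.

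Finally, for the tail estimate I would invoke Markov's inequality. Since the choice $u=v$ is admissible in the defining supremum, $M_t^H\ge0$ almost surely, so $M_t^H$ is a nonnegative random variable and for every $x>0$
\begin{equation*}
P(M_t^H>x)\le \frac{E(M_t^H)}{x}\le \frac{2t^H}{x\sqrt{\pi}},
\end{equation*}
using the upper bound just obtained. I anticipate no serious obstacle: the argument is a scaling reduction to Theorem~\ref{vardarcakar} followed by one application of Markov, and the only substantive verification is the process-level identity $M_t^H\stackrel{law}{=}t^H M_1^H$.
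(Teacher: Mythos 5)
Your proposal is correct and follows essentially the same route as the paper: the self-similarity property gives $E(M_t^H)=t^H E(M_1^H)$, the bounds of Theorem~\ref{vardarcakar} are then inserted, and Markov's inequality yields the tail estimate. Your write-up is in fact somewhat more careful than the paper's, since you justify the process-level identity $M_t^H\stackrel{law}{=}t^H M_1^H$ behind pulling $t^H$ out of the supremum and note the nonnegativity of $M_t^H$ required for Markov's inequality, both of which the paper uses implicitly.
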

\begin{proof}
By the self similarity property of fBm
\begin{eqnarray}
E(M_t^H)&=&E(\underset{0<u<v<t}{\sup}(B_u^H-B_v^H))\nonumber\\
&=&E(\underset{0<u<v<1}{\sup}({t^H}(B_u^H-B_v^H)))\nonumber\\
&=&t^HE(M_1^H).
\end{eqnarray}
And, by Markov's inequality,
\begin{equation}
P(M_t^H>x)\leq \frac{E(M_t^H)}{x} \leq \frac{2t^{H}}{x\sqrt{\pi}}.
\end{equation}
\end{proof}
\subsection{Numerical Results}
Let $B_t^H$, $t\geq 0$ be a fBm defined on the probability space
$(\Omega,\mathcal{F},\textbf{P}).$ We consider fBm with Hurst
parameter $1/2<H<1,$ which is on the interval $(0,t)$. We divide time $t$ into $n$ parts and obtain the increments $\{B_{i\delta}^H-B_{(i-j)\delta}^H:~i:1,2...,n;j=0,1,...,i\}$ with $\delta=t/n$, where each one of them is centered Gaussian variable with different variances. After such discretization, we have the following inequality,
\begin{equation}\max_{1\leq i\leq n,0\leq j \leq
i}(B_{i\delta}^H-B_{(i-j)\delta}^H)\leq M_t^H.
\end{equation}
\textbf{Covariance between increments}
The increments given above are centered Gaussian variables,
and they are not independent, the covariance between them is
\begin{eqnarray*}
E[(B_{i\delta}^H-B_{(i-j)\delta}^H)(B_{k\delta}^H-B_{(k-l)\delta}^H)]&=&E[B_{i\delta}^HB_{k\delta}^H]-E[B_{i\delta}^HB_{(k-l)\delta}^H]\\
&-&E[B_{(i-j)\delta}^HB_{k\delta}^H]+E[B_{(i-j)\delta}^HB_{(k-l)\delta}^H]\\
&=&\frac{1}{2}[|i\delta-(k-l)\delta|^{2H}+|(i-j)\delta-k\delta|^{2H}]\\
&+&\frac{1}{2}[-|i\delta-k\delta|^{2H}-|(i-j)\delta-(k-l)\delta|^{2H}]
\end{eqnarray*}
where $i=1,2...,n; j=0,1...,i; k=1,2...,n; l=0,1...,k$.\\
In the literature, there are many studies on the expected value and the
distribution of maximum of dependent Gaussian variables. The history
of this issue starts with Tippett, \cite{tippett}, Teichroew, \cite{Teichroew}, Clark and Williams, \cite{cm} and Bose and Gupta, \cite{bg}. Owen and Steck, \cite{os}
considered dependent but identically distributed Gaussian variables.
Clark, \cite{clark} obtained first four moments for two dependent and
differently distributed Gaussian variables, however these results
were only for two variables. Ross, \cite{ross2003} gives an upper bound for
dependent and differently distributed Gaussian variable, however in
our case we need a lower bound. Lai and Robinson, \cite{1976} found another
bound for dependent but identically distributed variables.
Brodtkorb, \cite{Brodtkorb} presented an approach for variables with singular
covariance matrix, however this approach also is not proper for our
study. There is no exact, theoretical solution for this problem.
Ross, \cite{mainpaper} presented numerical methods to obtain lower and upper
bounds for the expected value of maximum of dependent and
differently distributed (DDD) Gaussian variables using Theorem \ref{vitale} given by Vitale, \cite{vita}.
\begin{thm}\label{vitale}
Let $W_i,X_i,Y_i,~i=1,2...,n$ be DDD centered Gaussian variables.
And for all $i$ and $j$
$$E((W_i-W_j)^2)\leq E((X_i-X_j)^2)\leq E((Y_i-Y_j)^2)$$
is satisfied.
Then, for arbitrary constants $m_i,~ i=1,2...,n,$ $$E[\max_{i}{W_i}+m_i]\leq
E[\max_{i}{X_i}+m_i]\leq E[\max_{i}{Y_i}+m_i].$$\\
\end{thm}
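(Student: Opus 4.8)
The plan is to prove Theorem~\ref{vitale} by the Gaussian interpolation argument that underlies the Sudakov--Fernique inequality; it is enough to establish the right-hand inequality $E[\max_i(X_i+m_i)]\le E[\max_i(Y_i+m_i)]$, since the left-hand one is the same statement applied to the pair $(W,X)$ in place of $(X,Y)$. Without loss of generality I would take the vectors $X=(X_1,\dots,X_n)$ and $Y=(Y_1,\dots,Y_n)$ to be independent (passing to independent copies alters neither marginal law and hence neither of the two expectations), and abbreviate $\sigma^X_{ij}=E[X_iX_j]$, $\sigma^Y_{ij}=E[Y_iY_j]$ and $c_{ij}=\sigma^Y_{ij}-\sigma^X_{ij}$.

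First I would smooth the maximum by the log-sum-exp surrogate $f_\beta(z)=\beta^{-1}\log\sum_{i=1}^{n}e^{\beta z_i}$, which obeys the uniform bound $0\le f_\beta(z)-\max_i z_i\le \beta^{-1}\log n$ and therefore lets the final limit $\beta\to\infty$ go through by dominated convergence. Its derivatives are explicit: $\partial_i f_\beta=p_i$ and $\partial_i\partial_j f_\beta=\beta(p_i\delta_{ij}-p_ip_j)$, where $p_i=e^{\beta z_i}/\sum_k e^{\beta z_k}$ is a probability vector.

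I would then interpolate via $Z_i(t)=\sqrt{t}\,Y_i+\sqrt{1-t}\,X_i$ for $t\in[0,1]$, so that $\operatorname{Cov}(Z_i(t),Z_j(t))=t\,\sigma^Y_{ij}+(1-t)\,\sigma^X_{ij}$, and set $\phi(t)=E[f_\beta(Z(t)+m)]$ with $m=(m_1,\dots,m_n)$. Differentiating in $t$ and applying Gaussian integration by parts in the $X$- and $Y$-variables separately (justified since $f_\beta$ has bounded first and second derivatives and Gaussian vectors have all moments) produces the interpolation identity $\phi'(t)=\tfrac12\sum_{i,j}c_{ij}\,E[\partial_i\partial_j f_\beta(Z(t)+m)]$. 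Substituting the Hessian of $f_\beta$ and using $\sum_i p_i=1$ to symmetrise, I would recast this as a positive multiple of $E[\sum_{i,j}(c_{ii}+c_{jj}-2c_{ij})\,p_ip_j]$.

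The crux --- the step I expect to drive the whole proof --- is that the hypothesis translates exactly into the nonnegativity of this last quantity. Since $E[(X_i-X_j)^2]=\sigma^X_{ii}+\sigma^X_{jj}-2\sigma^X_{ij}$, and likewise for $Y$, the assumed ordering $E[(X_i-X_j)^2]\le E[(Y_i-Y_j)^2]$ is precisely $c_{ii}+c_{jj}-2c_{ij}\ge0$ for every $i,j$; as $p_ip_j\ge0$, each summand is nonnegative, so $\phi'(t)\ge0$ on $[0,1]$ and $\phi(0)\le\phi(1)$, i.e.\ $E[f_\beta(X+m)]\le E[f_\beta(Y+m)]$. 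Letting $\beta\to\infty$ yields $E[\max_i(X_i+m_i)]\le E[\max_i(Y_i+m_i)]$, and repeating the computation with $(W,X)$ replacing $(X,Y)$ closes the chain of inequalities. The point to keep in view is that the Hessian of $f_\beta$ has rows summing to zero, so it annihilates constant vectors; this is exactly why only the increment variances $c_{ii}+c_{jj}-2c_{ij}$, and not the raw covariances $c_{ij}$, enter the sign condition, and why the deterministic shifts $m_i$ are harmless --- they disturb neither this Hessian structure nor the probability weights $p_i$.
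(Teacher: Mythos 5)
Your proposal cannot be compared against the paper's own proof for a simple reason: the paper does not prove Theorem~\ref{vitale} at all. It is imported verbatim as a quoted result of Vitale \cite{vita} and used as a black box to justify the numerical lower-bound scheme of Ross \cite{mainpaper}; no argument for it appears anywhere in the text. Your proof is therefore a genuine, self-contained addition, and it is correct. What you give is the standard smart-path (Gaussian interpolation) proof of the Sudakov--Fernique comparison, extended to the non-centered conclusion, which is exactly the content of Vitale's strengthening. The individual steps all check out: the reduction of the left inequality to the pair $(W,X)$; the harmless passage to independent copies (both hypothesis and conclusion depend only on the two marginal laws); the derivative formulas $\partial_i f_\beta = p_i$ and $\partial_i\partial_j f_\beta = \beta(p_i\delta_{ij}-p_ip_j)$; the interpolation identity $\phi'(t)=\tfrac12\sum_{i,j}c_{ij}\,E[\partial_i\partial_j f_\beta(Z(t)+m)]$ via Stein's identity (which is where centeredness of the Gaussian vectors is used); and the symmetrisation to $\phi'(t)=\tfrac{\beta}{4}E\bigl[\sum_{i,j}(c_{ii}+c_{jj}-2c_{ij})p_ip_j\bigr]$, whose integrand is nonnegative precisely because the hypothesis reads $c_{ii}+c_{jj}-2c_{ij}\ge 0$. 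Your closing observation --- that the Hessian of the log-sum-exp surrogate kills constant vectors, so only increment variances enter and the shifts $m_i$ cost nothing --- is indeed the essential point that separates Vitale's statement from the classical centered setting. Two technicalities worth making explicit in a final write-up: the path derivative $Z_i'(t)=\tfrac{1}{2\sqrt{t}}Y_i-\tfrac{1}{2\sqrt{1-t}}X_i$ blows up at the endpoints, so the identity for $\phi'$ is asserted on the open interval $(0,1)$ and one concludes by continuity of $\phi$ on $[0,1]$; and the limit $\beta\to\infty$ needs no dominated convergence, since $0\le f_\beta(z)-\max_i z_i\le \beta^{-1}\log n$ is a uniform bound. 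Neither affects correctness.
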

In our study, we implement Ross, \cite{mainpaper} method in order to obtain lower bound for the
expected value of maximum loss of fBm. Ross, \cite{mainpaper} numerically finds lower and upper bounds on the expected value of maximum of DDD Gaussian variables which can't be calculated explicity. Using covariances of DDD variables, perfectly dependent and differently distributed (PDDD) Gaussian variables and independent and differently distributed (IDD) Gaussian variables which
satisfy Theorem \ref{vitale} are found. Lastly, by Theorem \ref{vitale},
\begin{eqnarray}
E[max(PDDD_{1})]&\leq& E[max(DDD)]\leq E[max(PDDD_{2})]\\
E[max(IDD_{1})]&\leq &E[max(DDD)]\leq E[max(IDD_{2})]
\end{eqnarray} are obtained.\\
In our study,
\begin{enumerate}
 \item Let $\{W_i,~~i=1,2...,n\}$ be PDDD Gaussian variables and $\{B_{i\delta}^H-B_{(i-j)\delta}^H,~~i=1,2...,n, j=0,1...,i\}$ are DDD Gaussian variables.
Suppose
$$ E((W_i-W_j)^2)\leq E[((B_{i\delta}^H-B_{(i-j)\delta}^H)-(B_{k\delta}^H-B_{(k-l)\delta}^H))^2]$$
holds.
Then by Theorem \ref{vitale} given above,
$$E(\underset{i}{\max}W_i)\leq E(\underset{i}{\max}(B_{i\delta}^H-B_{(i-j)\delta}^H))$$
One can construct many $W_i$'s which satisfy the above inequality.
Therefore, we try to find the maximum value of $E(\underset{i}{\max}W_i).$ The
calculation of this is given as
    $E(\underset{i}{\max}W_i)=E[E[\underset{i}{\max}W_i|Z]]=\int^{\infty}_{-\infty}h(z)\phi(z)dz$, where $h(z)=E(\underset{i}{\max}W_i|Z=z)$ and $\phi(z)$ is the probability density function of standart normal variable.
\item Let $\{Y_i,~ i=1,2...,n\}$ be IDD Gaussian variables and $\{B_{i\delta}^H-B_{(i-j)\delta}^H,~i=1,2...,n, j=0,1...,i\}$ are DDD Gaussian variables.
 Suppose following inequality holds,
$$E((Y_i-Y_j)^2)\leq
E[((B_{i\delta}^H-B_{(i-j)\delta}^H)-(B_{k\delta}^H-B_{(k-l)\delta}^H))^2]$$
Again, by Theorem \ref{vitale}, we have
$$E(\max_{i}Y_i)\leq E(\max_{i}(B_{i\delta}^H-B_{(i-j)\delta}^H)).$$
Hence, same algorithm as the algorithm given in 1. is applied. The
only difference is, in calculating $E(\underset{i}{max}Y_i)$ that is,
$$E(\max_{i}Y_i)=\int^{\infty}_{0}(1-\prod^{n}_{i=1}Pr(Y_i\leq y)-\prod^{n}_{i=1}Pr(Y_i\leq
-y))dy$$
\end{enumerate}
Following this algorithm, we have conducted a numerical study using MATLAB. Table \ref{rossmax}, shows numerical lower bounds we have found on the expected value of maximum loss of fBm up to time $1$, with Hurst parameter $H\in(1/2,1)$.
\begin{table}[htbp]
    \centering
    \setlength{\extrarowheight}{6pt}
        \begin{tabular}{c c c c c c c c}

            & & $n=5$ & $n=10$ &    $n=20$ &    $n=30$ &    $n=40$ &    $n=50$ \\
            \hline

            \multirow{2}{*}{\begin{sideways}$H=0.5$\end{sideways}}&IDD Lower Bound&$0.6350$ & $0.6551$  &$0.5753$&  $0.5171$&   $0.4753$&   $0.4434$\\
              &PDDD Lower Bound&$0.2847$ &  $0.1621$&   $0.1294$    &$0.1056$&  $0.0903$&   $0.0872$\\
                \hline
            \multirow{2}{*}{\begin{sideways}$H=0.6$\end{sideways}}&IDD Lower Bound& $0.5230$&$0.4855$   &$0.3954$   &$0.3407$   &$0.3040$&  $0.2772$\\
                &PDDD Lower Bound&$0.2429$  &$0.1250$   &$0.0956$&  $0.0579$&   $0.0624$&   $0.0535$\\
                \hline
            \multirow{2}{*}{\begin{sideways}$H=0.7$\end{sideways}}&IDD Lower Bound&$0.4309$ &   $0.3550$    &$0.2645$   &$0.2179$   &$0.1886$&  $0.1680$\\
                & PDDD Lower Bound&$0.2037$ &   $0.1055$&   $0.0695$&   $0.0491$&   $0.0425$&   $0.0305$\\
            \hline
            \multirow{2}{*}{\begin{sideways}$H=0.8$\end{sideways}}&IDD Lower Bound& $0.3578$&   $0.2688$&   $0.1830$&   $0.1432$&   $0.1196$&   $0.1038$\\
                &PDDD Lower Bound& $0.1682$&    $0.0859$    &$0.0478$   &$0.0357$   &$0.0274$&$0.0192$\\
        \hline
            \multirow{2}{*}{\begin{sideways}$H=0.9$\end{sideways}}&IDD Lower Bound&$0.2760$ &   $0.2010$&   $0.1324$&   $0.1005$    &$0.0819$   &$0.0696$\\
                &PDDD Lower Bound&$0.1420$ & $0.0674$&  $0.0348$    &$0.0201$&  $0.0200$&   $0.0127$
        \end{tabular}
    \caption{Numerical lower bounds on the expected value of maximum loss of fBm up to time $1$}\label{rossmax}
\end{table}

\subsection{Simulation Results}
In our study, we generate fBm with $1/2<H<1$ up to time 10000 with 10000 simulations using Cholesky method. This method is an exact simulation method for fBm, \cite{Deiker}. From this, we collect 10000 values of maximum loss of fBm up to time 10000. We find their average, which gives us the expected value of maximum loss of fBm up to time 10000. Afterwards, using the self similarity property of fBm we calculate the expected value of maximum loss of fBm up to time 1, as follows
\begin{equation}\label{selfmax}
E(M_1^H)=\frac{E(M_t^H)}{t^H}
\end{equation}
We repeat this for different values of Hurst parameter. The results are given in Table \ref{tablomax1}.
\begin{table}[htbp]

\centering
        \begin{tabular}{c c c}
            Hurst Parameter& Cholesky Simulation results \\ \hline
$0.5$   &$1.239$\\
$0.6$   &$1.00721$\\
$0.7$   &$0.82509$\\
$0.8$   &$0.69865$\\
$0.9$   &$0.61016$\\
        \end{tabular}
\caption{Simulation results for the expected value of maximum loss of fBm up to time 1}\label{tablomax1}
\end{table}
\section{Conclusion}
Any information on maximum possible loss as a measure of risk would be important for investors in order to hedge it or manage it. Here, we provide an upper bound and a lower bound on the expected value of maximum loss of fBm up to time $1$ and up to fixed time, $t$. By using Markov's inequality, we find an upper bound on the distribution of maximum loss. Later, we obtain numerical lower bounds on the expected value of maximum loss of fBm up to time $1$ with $1/2<H<1$. Table \ref{maxtumsonuclar} shows all these results on the expected value of the maximum loss of fBm up to time 1.
\begin{table}[htbp]
    \centering
    \setlength{\extrarowheight}{6pt}
    \scalebox{0.5}{
        \begin{tabular}{c c c c c c c c c c c c c c c c c rrrrrrr}
            \large{Hurst}  & \large{Cholesky Simulation} &\large{Caglar and Vardar}& \large{Caglar and Vardar}& \large{Vardar and Cakar}& \large{Vardar and Cakar}\\

\large{Parameter}& \large{Results} &\large{Lower Bound} &\large{Upper Bound} & \large{Lower Bound} &\large{Upper Bound}\\
\hline
\large{$0.5$}   &\large{$1.239$}&   \large{$0.39904$}&  \large{$1.59617$}&\large{$0.56418$}&    \large{$1.12866$}\\
\large{$0.6$}   &\large{$1.00721$}& \large{$0.39904$}&  \large{$1.59617$}&\large{$0.56418$}&    \large{$1.12866$}\\
\large{$0.7$}&\large{$0.82509$}&    \large{$0.39904$}&  \large{$1.59617$}&\large{$0.56418$}&    \large{$1.12866$}\\
\large{$0.8$}&  \large{$0.69865$}&  \large{$0.39904$}&  \large{$1.59617$}&\large{$0.56418$}&    \large{$1.12866$}\\
\large{$0.9$}   &\large{$0.61016$}& \large{$0.39904$}&  \large{$1.59617$}&\large{$0.56418$}&    \large{$1.12866$}

        \end{tabular}}

\centering
\scalebox{0.5}{
\begin{tabular}{c c c c c c c c c c c c c c c c c c}

         \large{Hurst} &     \multicolumn{6}{c}{\large{IDD Lower Bound}}& \multicolumn{6}{c}{\large{PDDD Lower Bound}} \\

    \large{Parameter}&\large{$n=5$}& \large{$n=10$}&\large{$n=20$}& \large{$n=30$}& \large{$n=40$}& \large{$n=50$}& \large{$n=5$}&\large{$n=10$}&   \large{$n=20$}& \large{$n=30$}&\large{$n=40$}&  \large{$n=50$}\\
\hline
\large{$0.5$}& \large{$0.6350$}&    \large{$0.6551$}&   \large{$0.5753$}&   \large{$0.5171$}&   \large{$0.4753$}&   \large{$0.4434$}&   \large{$0.2847$}    & \large{$0.1621$}& \large{$0.1294$}&   \large{$0.1056$}&   \large{$0.0903$}&   \large{$0.0872$}\\
\large{$0.6$}& \large{$0.5230$}&    \large{$0.4855$}&   \large{$0.3954$}&   \large{$0.3407$}&   \large{$0.3040$}&   \large{$0.2772$}&   \large{$0.2429$}    &\large{$0.1250$}&  \large{$0.0956$}&   \large{$0.0579$}&   \large{$0.0624$}&   \large{$0.0535$}\\
\large{$0.7$}& \large{$0.4309$}&    \large{$0.3550$}&   \large{$0.2645$}&   \large{$0.2179$}&   \large{$0.1886$}&   \large{$0.1680$}&   \large{$0.2037$}  &\large{$0.1055$}&    \large{$0.0695$}&   \large{$0.0491$}&   \large{$0.0425$}&   \large{$0.0305$}\\
\large{$0.8$}& \large{$0.3578$}&  \large{$0.2688$}& \large{$0.1830$}&   \large{$0.1432$}&   \large{$0.1196$}&   \large{$0.1038$}& \large{$0.1682$}  &\large{$0.0859$}&  \large{$0.0478$}&   \large{$0.0357$}&   \large{$0.0274$}&   \large{$0.0192$}\\
\large{$0.9$}& \large{$0.2760$}&  \large{$0.2010$}& \large{$0.1324$}&   \large{$0.1005$}&   \large{$0.0819$}&   \large{$0.0696$}&   \large{$0.1420$}  &\large{$0.0674$}&    \large{$0.0348$}&   \large{$0.0201$}&   \large{$0.0200$}&   \large{$0.0127$}

        \end{tabular}}
        \caption{Results for the expected value of the maximum loss of fBm up to time one}\label{maxtumsonuclar}
            \end{table}
\newpage
Here, Caglar and Vardar upper and lower bounds are the theoretical bounds given in \cite{vardarcaglar}. Comparing the simulation results with the values given in Table \ref{maxtumsonuclar}, we see that Vardar and Cakar theoretical upper bound and lower bound which were obtained in Theorem \ref{vardarcakar}, for $1/2<H<1$, provides the closest upper and lower bounds among all the bounds.
\section{Acknowledgement}
We would like to thank Mine Caglar from Koc University, Istanbul for her guidance in finding the lower bound given in Theorem \ref{vardarcakar}.
\section{References}

\end{document}